\crefname{hypothesis}{Hypothesis}{Hypotheses}
\title{Hybrid LSMR algorithms for large-scale general-form regularization
}
\author{Yanfei Yang\thanks{Department of Mathematics and Computer Science,
Zhejiang A\&F University, Hangzhou 311300, China
(\email{yang@zafu.edu.cn}).}}
\begin{document}
\nolinenumbers
\maketitle

\begin{abstract}
The hybrid LSMR algorithm is proposed
for large-scale general-form regularization.
It is based on a Krylov subspace projection method
where the matrix $A$ is first projected onto a subspace,
typically a Krylov subspace,
which is implemented via the Golub-Kahan 
bidiagonalization process applied to $A$,
with starting vector $b$.
Then a regularization term is employed to the projections.
Finally, an iterative algorithm is exploited to solve a least squares problem with constraints.
The resulting algorithms are called
the {hybrid LSMR algorithm}.
At every step, we exploit LSQR algorithm to solve the 
inner least squares problem,
which is proven to become better conditioned as the number of $k$ increases,
so that the LSQR algorithm converges faster.
Numerical experiments demonstrate that the regularized solution by our new hybrid LSMR algorithm is as accurate as that obtained by the JBDQR algorithm, which is based on joint bidiagonalization, and that our hybrid algorithm is significantly more efficient than JBDQR.
\end{abstract}

\begin{keywords}
Hybrid algorithm, General-form regularization, Golub-Kahan bidiagonalization process, LSMR
\end{keywords}

\begin{MSCcodes}
65F22, 65F10, 65J20, 65F35, 65F50
\end{MSCcodes}

\section{Introduction}
Consider the large-scale linear ill-posed problem of the form
\begin{equation}\label{eq1}
\min_{x\in \mathbb{R}^{n}}\|Ax - b\| \quad {\rm or}
\quad Ax = b, \quad A \in \mathbb{R}^{m \times n}, \quad b \in \mathbb{R}^{m},
\end{equation}
where the norm $\|\cdot\|$ is the 2-norm of a vector or matrix,
and $A$ is ill-conditioned with its singular values
decaying and centered at zero without
a noticeable gap,  and the right-hand side
$b = b_{true}+e$ is assumed to be contaminated
by a Gaussian white noise $e$, where $b_{true}$
is the noise-free right-hand side and $\|e\| < \|b_{true}\|$.
Without loss of generality, assume that
$Ax_{true} = b_{true}$.

Discrete ill-posed problems of the form \eqref{eq1} may be
derived from the discretization of linear ill-posed problems,
such as the Fredholm integral equation of the first kind,
\begin{equation*}
Kx=(Kx)(s)=\int_{\Omega}k(s,t)x(t)dt=g(s), s\in \Omega\subset \mathbb{R}^q,
\end{equation*}
where the kernel $k(s,t)\in L^2(\Omega\times \Omega)$ and
$g(s)$ are known functions, while $x(t)$ is
the unknown function to be sought.
These problems arise in various scientific and applications areas,
including mining engineering and
astronomy, biomedical sciences, and geoscience; see, e.g.
\cite{aster, berisha, engl96,epstein2007,haber2014, hansen10, kirsch11,  miller,nat01,vogel02}.
Due to the presence of the noise $e$ and the high ill-conditioning
of $A$, he naive solution
$x_{naive}=A^{\dag}b$ to \eqref{eq1} typically
is meaningless, as it
bears no relation to the true solution
$x_{true}=A^{\dag}b_{true}$,
where $\dag$ denotes the Moore-Penrose inverse of a matrix.
To compute a meaningful solution,
it is necessary to exploit \emph{regularization}
to overcome the inherent instability of ill-posed
problems. The key idea of regularization 
is to replace the original problem
\eqref{eq1} with a modified, more stable problem, 
allowing for the computation of a regularized solution that approximates the true solution.
There are various regularization techniques taking many forms;
see e.g.,\cite{chung,hansen98,hansen10} for more details.

A simple and popular regularization
is \emph{iterative regularization} which solves the underlying problem \eqref{eq1} iteratively.
In this setting, an iterative method is applied directly to
\begin{equation*}
\min_{x\in \mathbb{R}^{n}}\|Ax - b\|
\end{equation*}
and regularization is obtained by
terminating early.
It is well known that the
iterative method exhibits semiconvergence
on ill-posed problems, with
errors
decreasing initially but at some point beginning to
increase since the small singular values of $A$ start to amplify noise.
Therefore, for iterative regularization,
a vital and nontrivial
task is to select a good stopping iteration.

Another popular and well-established
way is \emph{hybrid regularization},
which computes an approximation of $x_{true}$
by solving 
\begin{equation}\label{gen1}
\min_{x\in\mathbb{S}}\|Lx\| \ {\rm subject \ to } \
\mathbb{S}=\{x|\|Ax-b\|\leq\tau \|e\|\}
\end{equation}
or 
\begin{equation}\label{gen2}
\min_{x\in\mathbb{R}^n}\{\|Ax-b\|^2+\lambda^2\|Lx\|^2\},
\end{equation}
where $\tau\approx 1$, and $L\in \mathbb{R}^{p\times n}$
is a regularization matrix,
usually a discrete approximation of some derivative operators,
and $\lambda>0$ is the regularization parameter
that controls the amount of regularization to balance
the fitting term and the regularization term; see e.g.,
\cite{hansen98,hansen10,tikhonov63}.
Problem \eqref{gen1} is discrepancy principle based general-form regularization
which is equivalent to the general-form Tikhonov regularization \eqref{gen2}.
The solution to (\ref{gen2}) is unique for a given $\lambda>0$ when
$N(A)\cap N(L)={0}$
where $N(\cdot)$ denotes the null space of a matrix.
When $L = I_n$, with $I_n$ being the $n \times n$ identity matrix,
problem (\ref{gen1}) and problem \eqref{gen2}
are said to be in standard form.


It is worth noting that regularization in norms other than the 2-norm is also important, as some ill-posed problems are based on underlying mathematical models that are not linear.
Consider general optimizations of the form
\begin{equation*}
\min_{x\in \mathbb{S}}\mathcal{R}(Lx)  \ {\rm subject \ to } \
\mathbb{S }=\{x|\mathcal{J}(Ax-b) \leq \tau \|e\| \}
\end{equation*}
or
\begin{equation*}
\min_{x\in\mathbb{R}^n}\{\mathcal{J}(Ax-b)+\lambda^2\mathcal{R}(Lx)\},
\end{equation*}
where $\mathcal{J}$ is a fit-to-data term and $\mathcal{R}$ is a regularization term.
see e.g., \cite{gazzola2014} and \cite[pp.~120-121]{hansen98}.
Solving these problems need to use nonlinear optimization methods,
however, many of these methods require solving a subproblem
with an approximate linear model \cite{chung,gazzola2014,hansen98}.

In this paper, we consider the hybrid LSMR method.
The basic idea of the hybrid method is first to exploit LSMR
to solve the underlying problem and meanwhile project it 
onto a relatively stable problem
and then employ a regularization term to the projections.
Instead of \eqref{gen2}, we focus our discussion on \eqref{gen1}, but
the hybrid method we present is fairly general and can be used to solve
a variety of combinations of fit-to-data term and regularization term.

Most of the existing hybrid algorithms for solving large-scale ill-posed problems in general-form regularization solve \eqref{gen2}; see, e.g., \cite{novati2014adaptive,Novati2014gcv,bazan2014,gazzola2013multi,gazzola2014,yang} for more details.
Though, mathematically, \eqref{gen1} is equivalent to \eqref{gen2},
numerically, it is totally different between hybrid methods for solving
\eqref{gen1} and \eqref{gen2}.
The hybrid method for solving \eqref{gen1} has nothing to do with regularization parameter because the number of iteration plays the role of the regularization parameter,
and it is the first-projection-then-regularization method.
Moreover, the first-projection-then-regularization method for \eqref{gen1}
is definitely different from the first-regularization-then-projection one.
For solving \eqref{gen1}, one of the first-regularization-then-projection algorithms
is JBDQR presented by Jia and Yang; see \cite{jiayang2020} for more details.
The hybrid method for solving \eqref{gen2}, however, 
needs to determine the optimal regularization parameter
at every step. Unfortunately, not like the underlying problem, the projected problems may not satify the Picard condition,
so it is difficult to determined the optimal regularization
parameter of the projection . 
Further more, it is difficult to prove the optimal regularization parameter of the projection to be
the best regularization parameter of the original problem unless
the Krylov subspace captures the singular values in their natural order,
starting with the largest, for details see e.g.,\cite{hansen98,hansen10}.
Only a few existing hybrid algorithms solve \eqref{gen1}.

Hansen \textit{et al.} \cite{hansen92} propose a 
modified truncated SVD (MTSVD) method for solving \eqref{gen1},
which is an alternative to the truncated GSVD method which solves \eqref{gen2}. 
The algorithm first computes the SVD of $A$ and 
then extracts the best rank-$k$ approximation to $A$ by truncating the SVD of $A$.
It solves a sequence of least squares problems by the adaptive QR factorization
until a best regularized solution is found. 
The number of the truncation $k$ plays the role of the regularization parameter.
Although this algorithm avoids computing the GSVD of the matrix 
pair $\{A, L\}$, it is not suitable for large-scale problems 
since computing the SVD of $A$ is infeasible for $A$ large.

Based on the joint bidiagonalization process (JBD) process,
Jia and Yang \cite{jiayang2020} propose the JBDQR algorithm for sloving \eqref{gen1}.
The JBD process is an inner-outer iterative process.
At each outer iteration, the JBD process needs to compute the solution of a large-scale linear least squares problem with the coeﬃcient matrix $(A^T, L^T )^T$ that is larger than the problem itself and supposed to be solved iteratively, called inner iteration. Fortunately, $(A^T, L^T )^T$ is generally well conditioned, as $L$ is typically so in applications \cite{hansen98,hansen10}. 
In these cases, the LSQR algorithm \cite{lsqr1982} can solve the least squares problems mentioned efficiently. Although the underlying solution subspaces generated by the process are legitimate, 
the overhead of computation of the methods based on the process may be extremely expensive.
Unfortunately, methods based on the JBD process can not avoid solving the large-scale inner least squares problem at every iteration. Finally, methods based on the process need to solve a large-scale least squares problem with the coeﬃcient matrix $(A^T , L^T )^T$ to form a regularized solution. Therefore, the method based on the JBD process may be substantially expensive.

Yang \cite{yang} presents the hybrid CGME and TCGME algorithms for solving \eqref{gen1}.
These algorithms first expolit the Krylov solver, CGME and TCGME, 
to solve the underlying problem \eqref{eq1},
and then apply the general-form regularization term to the 
projected problems which is generated by the Krylov solvers.
Finally, computing the solution to the projected problems with general-form regularization. 
The method, on the one hand, can take the full advantage of 
the Krylov subspace method to reduce the large problem to the small or medium problem.
On the other hand, it can retain the information as much as possible since 
it leaves the regularization term remaining unchanged.
However, CGME and TCGME only have partial regularization,
which means they may not capture all the needed dominant SVD components of $A$.
Therefore, it can not be expected to capture all the needed dominant GSVD of $\{A, L\}$,
which is the key to solve \eqref{gen1} and \eqref{gen2} successfully.
As a consequence, the accuracy of the hybrid CGME algorithm is not so high.



In this paper, we are concerned with the situation when $L$
is fairly general and large.
We describe a new projection-based algorithm to solve \eqref{gen1}.
First, we expolit the LSMR algorithm to solve \eqref{eq1}
and the matrix $A$ is
projected onto a sequence of nested Krylov subspaces.
Then a general-form regularization term is applied to the projections,
Finally we solve projected problems with a general-term regularization.
At every step, we use LSQR to solve the inner linear least squares problems that result, which are proven to become better conditioned as $k$ increases,
so the iterative algorithm converges faster.
In theory, the inner linear least squares problems need to be solved accurately.
We prove how to choose the stopping
tolerance for the iterative algorithm
to solve the inner least squares problems,
in order to guarantee that the regularized solutions when
the inner least squares problems are solved by iterative
methods have the same accuracy as the ones when the inner least squares problems are solved exactly. The resulting algorithm
is called the hybrid LSMR algorithm, which is abbreviated as hyb-LSMR.

The organization of this paper is as follows.
In Section 2, we briefly review LSMR.
We propose the hybrid LSMR algorithm and
make an analysis on the
conditioning of inner least squares problems in Section 3.
In Section 4, we make a theoretical analysis on the stopping tolerance for
inner least squares problems.
Numerical experiments are presented in Section 5.
Finally, we conclude the paper in Section 6.

\section{Golub-Kahan bidiagonalization process and LSMR}\label{sec2}
The lower Lanczos bidiagonalization process is presented by Paige and Saunders
\cite{lsqr1982} which is a variant of the upper bidiagonalization
Lanczos process due to Golub and Kahan \cite{golub65}.
Given the initial vectors $\beta_1=\|b\|$, $p_1=b/\beta_1$, and $\alpha_1q_1=A^Tp_1$,
for $i=1, 2, 3, \cdots$, the Golub-Kahan
iterative bidiagonalization computes
\begin{eqnarray}
\beta_{i+1}p_{i+1}&=&Aq_i-\alpha_ip_{i},\label{bid2}\\
\alpha_{i+1}q_{i+1}&=&A^Tp_{i+1}-\beta_{i+1}q_{i}, \label{bid1}
\end{eqnarray}
where $\beta_{i+1}\geq 0$ and $\alpha_{i+1}\geq 0$
are normalization constants chosen so that $\|q_i\|=\|p_{i+1}\|=1$.

Define matrices
\begin{equation}\label{lsqr1}
Q_k=(q_1, \ldots,q_k)\in \mathbb{R}^{n\times k},\ P_k = (p_1,\ldots,p_k)\in \mathbb{R}^{m\times k},
\end{equation}
and
\begin{equation}\label{bk}
B_k = \left(
\begin{array}{cccc}
\alpha_1 &  &  &  \\
\beta_2 & \alpha_2 &  &  \\
&\beta_3 &\ddots  &  \\
&  & \ddots  & \alpha_k \\
&&&\beta_{k+1}\\
\end{array}
\right)\in \mathbb{R}^{(k+1)\times k}.
\end{equation}
The recurrence relations (\ref{bid1}) and (\ref{bid2})
can be written of the form
\begin{eqnarray*}
 AQ_k=P_{k+1}B_{k} \ \ {\rm and} \ \ A^{T}P_{k+1}=Q_{k}B_k^T+\alpha_{k+1}p_{k+1}(e_{k+1}^{(k+1)})^T,
\end{eqnarray*}
where $e_{k+1}^{(k+1)}$ denotes the $(k + 1)$-th canonical basis vector of $\mathbb{R}^{k+1}$.
These matrices can be computed by the following $k$-step
Golub-Kahan bidiagonalization process.
\begin{algorithm}[htbp]
\caption{\ $k$-step Golub-Kahan bidiagonalization process.}
\label{alg1}\label{alg2}
\begin{algorithmic}[1]
\STATE Take $p_1 = b/\|b\|\in \mathbb{R}^{m}$, and define $\beta_1q_0 = 0$.\\
\STATE For $j = 1, 2, \cdots, k$\\
 $r = A^Tp_j - \beta_jq_{j-1}$\\
  $\alpha_j = \|r\|$; ~$q_j = r/\alpha_j$\\
  $s = Aq_j - \alpha_jp_j$\\
  $\beta_{j+1} = \|s\|$; ~$ p_{j+1} = s/\beta_{j+1}$.\\
\end{algorithmic}
\end{algorithm}

LSMR \cite{bjorck2015numerical,hnety} is mathematically equivalent to MINRES \cite{paige1975solution} 
 applied to the normal equation 
\begin{equation}\label{normal}
A^TAx=A^Tb
\end{equation}
of \eqref{eq1}, and it solves
\begin{equation}\label{nor}
\|A^T(b-Ax_k)\|=\min_{x\in \mathcal{V}_k^R}\|A^T(b-Ax)\|
\end{equation}
for the iterate 
\begin{equation}
x_k=Q_ky_k \ {\rm with} \ y_k=\arg\min_{y\in\mathbb{R}^k}\left\|\left(\begin{array}{c}
    B_k^TB_k  \\
\alpha_{k+1}\beta_{k+1}e_k^{(k)} 
\end{array}\right)y-\alpha_1\beta_1e_1^{(k+1)}\right\|,  
\end{equation}
where $\mathcal{V}_k^R=\mathcal{K}_k(A^TA,A^Tb)$ is the
$k$ dimensional Krylov subspace generated by the matrix $A^TA$ and the vector $A^Tb$.

From the $k$-step Golub-Kahan
bidiagonalization process, it follows
\begin{equation}\label{pro_2}
Q_{k+1}^TA^TAQ_k=\left(B_k^TB_k,\alpha_{k+1}\beta_{k+1}e_k^{(k)}\right)^T.
\end{equation}
Noting $Q_{k+1}^TA^Tb=\alpha_1\beta_1e_1^{(k+1)}$, from \eqref{pro_2}, we have
\begin{eqnarray}\label{xk}
x_k&=&Q_k(Q_{k+1}^TA^TAQ_k)^{\dagger}Q_{k+1}^TA^Tb\\
&=&\left(Q_{k+1}\left(\begin{array}{c}
    B_k^TB_k  \\
\alpha_{k+1}\beta_{k+1}e_k^{(k)} 
\end{array}\right)Q_k^T\right)^{\dagger}A^Tb\\
&=&
Q_k\left(\begin{array}{c}
    B_k^TB_k  \\
\alpha_{k+1}\beta_{k+1}e_k^{(k)} 
\end{array}\right)^{\dagger}Q_{k+1}^TA^Tb\\
&=&
\alpha_{k+1}\beta_{k+1}Q_k\left(\begin{array}{c}
    B_k^TB_k  \\
\alpha_{k+1}\beta_{k+1}e_k^{(k)} 
\end{array}\right)^{\dagger}e_1^{k+1}
\end{eqnarray}
Therefore, LSMR solves the problem
\begin{equation}\label{pro}
\min\left\|\left(Q_{k+1}\left(\begin{array}{c}
    B_k^TB_k  \\
 \alpha_{k+1}\beta_{k+1}e_k^{(k)} 
\end{array}\right)Q_k^T\right)x-A^Tb\right\|
\end{equation}
for $x_k$ starting with $k=1$,
where the projection $$Q_{k+1}\left(B_k^TB_k,\alpha_{k+1}\beta_{k+1}e_k^{(k)}\right)^TQ_k^T=Q_{k+1}Q_{k+1}^TA^TAQ_kQ_k^T,$$ 
a rank-$k$ approximation to $A^TA$.
Substitutes $A^TA$ in the normal equation \eqref{normal} of underlying ill-posed problem \eqref{eq1} with it's a rank-$k$ approximation, we have \eqref{pro}.

Jia in \cite{jia20203} points out that the semi-convergence of LSMR
must occur no sooner than that of LSQR and
the regularizing effects of LSMR are not inferior to those
of LSQR and the best regularized solutions by LSMR
are at least as accurate as those by LSQR which means 
LSMR has the same regularization ability as that of LSQR. Jia also points out that LSMR has full regularization for severely or moderately ill-posed problems with suitable $\rho>1$ or $\alpha>1$.
About the accuracy of the rank-$k$ approximation and
the relationship between the regularizing effects of LSMR and those of LSQR and comparison of the regularization ability of the two methods see \cite{jia20203} for more details.

This means for mildly ill-posed and some
severely or moderately ill-posed problems with 
$\rho \leq 1$ and $\alpha \leq 1$
LSMR does not have full regularization.
Therefore, 
we consider to employ the general-form regularization term to the projected problems.

\section{Hybrid LSMR algorithm}\label{sec4}

According to the above analysis,
we apply the general-form regularization
term to \eqref{pro}. Define $$B_{proj}=Q_{k+1}\left(\begin{array}{c}
    B_k^TB_k  \\
\alpha_{k+1}\beta_{k+1}e_k^{(k)} 
\end{array}\right)Q_k^T.$$ 
This means we solve the following problems
\begin{equation}\label{hyb}
\min_{x\in\mathbb{S}}\|Lx\| 
\ {\rm subject \ to} \
\mathbb{S}=\left\{x|\left\|B_{proj}x-A^Tb\right\|=\min\right\}, 
\end{equation}
starting with $\ k=1, 2\cdots.$

Regarding the hyb-LSMR solution $x_{L,k}$,
we can establish the following results.
\begin{theorem}\label{thm}
Let $Q_{k}$ and $Q_{k+1}$ be defined in \eqref{lsqr1}.
Then the solution to \eqref{hyb} can be written as
\begin{equation}\label{xLk}
x_{L,k} =x_k -\left(L(I_n-Q_kQ_k^T)\right)^{\dagger} Lx_k.
\end{equation}
\end{theorem}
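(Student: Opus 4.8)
\emph{Proof plan.} The idea is to describe the feasible set $\mathbb{S}$ explicitly, reduce \eqref{hyb} to an ordinary linear least squares problem in a new unknown, and then simplify the resulting formula using a standard property of the Moore--Penrose inverse.

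First I would identify $\mathbb{S}$. Put
\begin{equation*}
M=\left(\begin{array}{c} B_k^TB_k \\ \alpha_{k+1}\beta_{k+1}e_k^{(k)}\end{array}\right)\in\mathbb{R}^{(k+1)\times k},
\end{equation*}
so that $B_{proj}=Q_{k+1}MQ_k^T$. As long as the Golub--Kahan process has not terminated before step $k$, the bidiagonal matrix $B_k$ has full column rank $k$, hence $B_k^TB_k$ is positive definite and $M$ has full column rank $k$; since $Q_{k+1}$ and $Q_k$ have orthonormal columns, this forces $N(B_{proj})=N(Q_k^T)$, the orthogonal complement of the column space of $Q_k$, i.e. the range of $I_n-Q_kQ_k^T$. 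Moreover, \eqref{xk} shows that $x_k=Q_kM^{\dagger}Q_{k+1}^TA^Tb=B_{proj}^{\dagger}A^Tb$, so $x_k$ is a least squares solution of $B_{proj}x=A^Tb$, and therefore
\begin{equation*}
\mathbb{S}=x_k+N(B_{proj})=\left\{\,x_k+(I_n-Q_kQ_k^T)w\ :\ w\in\mathbb{R}^n\,\right\}.
\end{equation*}

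Next, substituting this parametrization into \eqref{hyb} and writing $\Pi_k=I_n-Q_kQ_k^T$, problem \eqref{hyb} becomes the unconstrained least squares problem $\min_{w\in\mathbb{R}^n}\|Lx_k+L\Pi_kw\|$, one of whose minimizers is $w_\star=-(L\Pi_k)^{\dagger}Lx_k$; this gives $x_{L,k}=x_k-\Pi_k(L\Pi_k)^{\dagger}Lx_k$. It then remains to drop the leading factor $\Pi_k$, and this is where the pseudoinverse property enters: the range of $(L\Pi_k)^{\dagger}$ equals the range of $(L\Pi_k)^T=\Pi_kL^T$, which is contained in the range of the orthogonal projector $\Pi_k$, so $\Pi_k(L\Pi_k)^{\dagger}=(L\Pi_k)^{\dagger}$ and \eqref{xLk} follows. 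I would also note that \eqref{xLk} indeed lies in $\mathbb{S}$, since $(L\Pi_k)^{\dagger}Lx_k$ belongs to the range of $\Pi_k=N(B_{proj})$, and that when $N(L)\cap N(B_{proj})\neq\{0\}$ formula \eqref{xLk} picks out the minimum-norm minimizer, because $x_k$ lies in the column space of $Q_k$ (orthogonal to $N(B_{proj})$) and $(L\Pi_k)^{\dagger}Lx_k$ is orthogonal to $N(L\Pi_k)$.

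The step I expect to be the main obstacle is the precise description of $\mathbb{S}$ as the affine subspace $x_k+\mathrm{range}(I_n-Q_kQ_k^T)$ — which relies on the full column rank of $B_k$ and on the identity $x_k=B_{proj}^{\dagger}A^Tb$ — together with the care needed in handling ranges of Moore--Penrose inverses when passing from $x_k-\Pi_k(L\Pi_k)^{\dagger}Lx_k$ to the stated closed form \eqref{xLk}.
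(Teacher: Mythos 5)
Your argument is correct, but it takes a genuinely different route from the paper. The paper's proof invokes Eld\'en's closed-form solution for the constrained problem as a black box, namely $x_{L,k}=\bigl(I_n-[L(I_n-B_{proj}^{\dagger}B_{proj})]^{\dagger}L\bigr)x_k$, and then reduces the whole task to the single algebraic identity $B_{proj}^{\dagger}B_{proj}=Q_kQ_k^T$ (computed via $B_{proj}^{\dagger}=Q_kM^{\dagger}Q_{k+1}^T$ and $M^{\dagger}M=I_k$). You instead rederive that formula from scratch: you identify $\mathbb{S}$ as the affine set $x_k+N(B_{proj})$ with $N(B_{proj})=\mathrm{range}(I_n-Q_kQ_k^T)$, reduce \eqref{hyb} to the unconstrained problem $\min_w\|Lx_k+L\Pi_kw\|$, and then absorb the leading projector using $\mathrm{range}((L\Pi_k)^{\dagger})=\mathrm{range}(\Pi_kL^T)\subseteq\mathrm{range}(\Pi_k)$. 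The two routes meet at the same identity (your $N(B_{proj})=\mathrm{range}(\Pi_k)$ is exactly the paper's $B_{proj}^{\dagger}B_{proj}=Q_kQ_k^T$ in projector form), but yours is self-contained, makes explicit where the full column rank of $B_k$ and the identity $x_k=B_{proj}^{\dagger}A^Tb$ are used, and additionally settles the uniqueness question by showing \eqref{xLk} is the minimum-norm minimizer when $N(L)\cap N(B_{proj})\neq\{0\}$ --- a point the paper leaves implicit in the citation of Eld\'en. The paper's version is shorter but leans entirely on the external reference.
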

\begin{proof}
From Eld$\acute{e}$n \cite{elden82} and \eqref{xk}, we derive
\small{
\begin{equation*}
x_{L,k} = \left(I_n - \left[L\left(I_n-B_{proj}^{\dagger}
B_{proj}\right)\right]^{\dagger}L\right)x_k.
\end{equation*}
}
Since ${B}_k$ is invertible and $Q_k$ is column orthonormal,  
it is to see that
\begin{align*}
  B_{proj}^{\dagger }= \left(Q_{k+1}\left(\begin{array}{c}
    B_k^TB_k  \\
\alpha_{k+1}\beta_{k+1}e_k^{(k)} 
\end{array}\right)Q_k^T\right)^{\dagger}=
Q_k\left(\begin{array}{c}
    B_k^TB_k  \\
\alpha_{k+1}\beta_{k+1}e_k^{(k)} 
\end{array}\right)^{\dagger}Q_{k+1}^T.
\end{align*}
Therefore
\begin{equation*}
   B_{proj}^{\dagger}B_{proj} =Q_k\left(\begin{array}{c}
    B_k^TB_k  \\
\alpha_{k+1}\beta_{k+1}e_k^{(k)} 
\end{array}\right)^{\dagger}Q_{k+1}^T
Q_{k+1}\left(\begin{array}{c}
    B_k^TB_k  \\
\alpha_{k+1}\beta_{k+1}e_k^{(k)} 
\end{array}\right)Q_k^T
=Q_kQ_k^T,
\end{equation*}
because $Q_k^TQ_k=I_k$, and $B_{proj}^{\dagger}B_{proj}=I_k$ due to 
$B_{proj}$ being column full rank.
Combining the above equations,
we obtain \eqref{xLk}.
\end{proof}


From the form of the regularization solution \eqref{xLk},
the solution can be thought of as the iterative solution \eqref{xk} modified by $\left(L(I_n-Q_kQ_k^T)\right)^{\dagger} Lx_k$.
When $L=I_n$, recalling \eqref{xk} and $Q_k^TQ_k=I_n$, it follows from \eqref{xLk} that
\begin{eqnarray*}
x_{I,k} &=&x_k -\left((I_n-Q_kQ_k^T)\right)^{\dagger} x_k\\
&=&(I_n-(I_n-Q_kQ_k^T))x_k\\
&=&Q_kQ_k^Tx_k=Q_kQ_k^TQ_k(Q_{k+1}^TA^TAQ_k)^{\dagger}Q_{k+1}^TA^Tb\\
&=&Q_k(Q_{k+1}^TA^TAQ_k)^{\dagger}Q_{k+1}^TA^Tb=x_k
\end{eqnarray*}
with $\left((I_n-Q_kQ_k^T)\right)^{\dagger}=I_n-Q_kQ_k^T$
because $(I_n-Q_kQ_k^T)$ is an orthogonal projection.
This means, when $L=I_n$, the regularized solutions solved by our hybrid algorithms are
the ones to standard-form regularization problems.

Now we consider the product of the Moore-Penrose inverse
and vector in \eqref{xLk}.
Let $z_k = (L(I_n-Q_kQ_k^T))^{\dagger}Lx_k$.
Then $z_k$ is the solution to the following least squares problem
\begin{equation}\label{eq10}
\min_{z\in \mathbb{R}^n}\left\|\left(L(I_n-Q_kQ_k^T)\right)z - Lx_k\right\|.
\end{equation}
Because of the large size of $L(I_n-Q_kQ_k^T)$,
we suppose that the problems \eqref{eq10}
can only be solved by iterative algorithms.
We will discuss to use the LSQR algorithm \cite{lsqr1982}
to solve the problems. In order to take fully advantage of
the sparsity of $L$ itself and reduce the overhead of computation
and the storage memory, it is critical to avoid forming the dense matrix
$L(I_n-Q_kQ_k^T)$ explicitly within LSQR.
Notice that the only action of $L(I_n-Q_kQ_k^T)$ in
LSQR is to form the products
of it and its transpose with vectors. We propose \cref{alg3},
which efficiently implements the Lanczos bidiagonalization process
without forming $L(I_n-Q_kQ_k^T)$ explicitly.

\begin{algorithm}[htbp]
	\caption{$\widehat k$-step Lanczos bidiagonalization process on
		$L(I_n -Q_kQ_k^T)$.}
    \label{alg3}
	\begin{algorithmic}[1]
		\STATE Taking $\beta_1\widehat{u}_1= Lx_k$, $\beta_1=\|Lx_k\|$, $w_1= L^T\widehat{u}_1$,
	   and define $\widehat{\beta}_1\widehat{v}_0 = 0$.
		\STATE For $j=1, 2, \ldots, \widehat k$ \\
		$\widehat{r}=w_j-Q_k(Q_k^Tw_j)-\widehat{\beta}_j\widehat{v}_{j-1}$\\
		$\widehat{\alpha}_j=\|\widehat{r}\|$;~~$\widehat{v}_j=\widehat{r}/\widehat{\alpha}_j$;
         ~$g_{j} = Q_k^T\widehat{v}_{j}$\\
		$\widehat{s}=L\widehat{v}_j-L(Q_kg_j)-\widehat{\alpha}_j\widehat{u}_j$ \\
		$\widehat{\beta}_{j+1} = \|\widehat{s}\|$;~~$\widehat{u}_{j+1} = \widehat{s}/\widehat{\beta}_{j+1}$;
		~$w_{j+1} = L^T\widehat{u}_{j+1}$\\
	\end{algorithmic}
\end{algorithm}

Next, we consider to solve \eqref{eq10} by using LSQR.
Define
\begin{equation}\label{Q}
Q=\left(
\begin{array}{cc}
Q_k & Q_k^{\perp} \\
\end{array}
\right)
\end{equation}
as an orthogonal matrix and $Q_k^{\perp}\in\mathbb{R}^{n\times (n-k)}$
an orthogonal complement of the matrix $Q_k$.
Using the notation of \eqref{Q}, it is easy to derive
\begin{equation}\label{LQ}
L(I_n -Q_kQ_k^T)=LQ_k^{\perp}(Q_k^{\perp})^T.
\end{equation}
The nonzero singular values of $LQ_k^{\perp}(Q_k^{\perp})^T$
are identical to those of $LQ_k^{\perp}$, 
due to the orthogonality of $Q_k^{\perp}$.
Therefore, we obtain
\begin{equation}\label{condi}
\kappa(L(I_n-Q_kQ_k^T))=\kappa(LQ_k^{\perp}(Q_k^{\perp})^T)
=\kappa(LQ_k^{\perp}).
\end{equation}

We now establish the result about how the conditioning of (\ref{eq10})
changes as $k$ increasing with these notations.
\begin{theorem}\label{thm3}
Let the matrix $Q_k^{\perp}$ be defined in (\ref{Q}) and $L\in\mathbb{R}^{p\times n}$.
When $p\geq n-k$, we obtain
\begin{equation}\label{eq3}
\kappa(LQ_k^{\perp})\geq\kappa(LQ_{k+1}^{\perp}),\quad k = 2, 3, \cdots, n-1,
\end{equation}
that is,
\begin{equation}\label{eq2}
\kappa(L(I_n -Q_kQ_k^T))\geq\kappa(L(I_n -Q_{k+1}Q_{k+1}^T)),
\quad k = 2, 3, \cdots,n-1.
\end{equation}
\end{theorem}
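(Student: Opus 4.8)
The plan is to reduce \eqref{eq3} to the elementary fact that appending one column to a matrix can only worsen its condition number, and then to combine this with \eqref{condi}. First I would exploit the nested structure produced by \cref{alg1}: since $Q_{k+1}=(Q_k,\,q_{k+1})$ we have $\operatorname{range}(Q_k)\subseteq\operatorname{range}(Q_{k+1})$, hence $\operatorname{range}(Q_{k+1}^{\perp})\subseteq\operatorname{range}(Q_k^{\perp})$. Moreover $q_{k+1}$ is orthogonal to $\operatorname{range}(Q_k)$, so $q_{k+1}\in\operatorname{range}(Q_k^{\perp})$, and $q_{k+1}$ is orthogonal to $\operatorname{range}(Q_{k+1}^{\perp})$ because that space is orthogonal to $\operatorname{range}(Q_{k+1})\ni q_{k+1}$. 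Therefore the columns of $Q_{k+1}^{\perp}$ together with the unit vector $q_{k+1}$ form an orthonormal basis of $\operatorname{range}(Q_k^{\perp})$, so there is an orthogonal matrix $W_k\in\mathbb{R}^{(n-k)\times(n-k)}$ with $Q_k^{\perp}W_k=(Q_{k+1}^{\perp},\,q_{k+1})$. Multiplying on the left by $L$ gives $LQ_k^{\perp}W_k=(LQ_{k+1}^{\perp},\,Lq_{k+1})$, and since $W_k$ is orthogonal the matrices $LQ_k^{\perp}$ and $(LQ_{k+1}^{\perp},\,Lq_{k+1})$ have the same singular values; in particular $\kappa(LQ_k^{\perp})=\kappa\big((LQ_{k+1}^{\perp},\,Lq_{k+1})\big)$.

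It then remains to compare $\kappa(M)$ with $\kappa\big((M,v)\big)$ for $M:=LQ_{k+1}^{\perp}\in\mathbb{R}^{p\times(n-k-1)}$ and $v:=Lq_{k+1}$. The hypothesis $p\ge n-k$ guarantees that the $p\times(n-k)$ matrix $(M,v)$ has no more columns than rows, so $\sigma_1$ and $\sigma_{n-k}$ are genuinely its largest and smallest singular values, and likewise $\sigma_1$ and $\sigma_{n-k-1}$ for $M$. Applying the Cauchy interlacing theorem to the bordered Gram matrix $(M,v)^T(M,v)$ (equivalently, the interlacing of singular values under a one-column extension) yields
\[
\sigma_1\big((M,v)\big)\ \ge\ \sigma_1(M),\qquad \sigma_{n-k}\big((M,v)\big)\ \le\ \sigma_{n-k-1}(M).
\]
Dividing the first inequality by the second gives $\kappa\big((M,v)\big)\ge\kappa(M)$, that is, $\kappa(LQ_k^{\perp})\ge\kappa(LQ_{k+1}^{\perp})$, which is \eqref{eq3}; if $M$ is rank deficient the right-hand side is $+\infty$ and the claim is either vacuous or forces $\kappa(LQ_k^{\perp})=+\infty$ as well, consistently with the bound. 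Finally, \eqref{eq2} follows at once from \eqref{eq3} and the identity $\kappa(L(I_n-Q_kQ_k^T))=\kappa(LQ_k^{\perp})$ recorded in \eqref{condi}.

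The interlacing step is routine; the part that needs care is the first paragraph — verifying that $q_{k+1}$ really lies in $\operatorname{range}(Q_k^{\perp})$ and is orthogonal to $\operatorname{range}(Q_{k+1}^{\perp})$, so that the passage from $LQ_k^{\perp}$ to the one-column-richer matrix $(LQ_{k+1}^{\perp},\,Lq_{k+1})$ is exact up to an orthogonal change of basis, and the bookkeeping of singular-value indices so that ``largest over smallest'' is the ratio actually being estimated. I would also spell out explicitly where $p\ge n-k$ enters — it rules out forced zero singular values that would make $\sigma_{n-k}$ degenerate and the interlacing vacuous — and dispose of the boundary index $k=n-1$, where $Q_n^{\perp}$ is empty, by the standard convention for the condition number of an empty matrix.
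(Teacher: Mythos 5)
Your proof is correct and follows essentially the same route as the paper: both rest on the interlacing of singular values under appending/deleting a column (the lemma from Golub and Van Loan that the paper cites) combined with the identity \eqref{condi}. In fact your first paragraph supplies the justification the paper leaves implicit, namely that $LQ_{k+1}^{\perp}$ is, up to a right orthogonal factor $W_k$, the matrix $LQ_k^{\perp}$ with the column $Lq_{k+1}$ removed, which is precisely what makes the cited interlacing lemma applicable.
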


\begin{proof}
By using the lemma from \cite[pp.78]{golub13}, it can be obtained 
\begin{eqnarray*}
\sigma_{\max}(LQ_k^{\perp})\geq \sigma_{\max}(LQ_{k+1}^{\perp}),\label{1}\\
\sigma_{\min}(LQ_k^{\perp})\leq \sigma_{\min}(LQ_{k+1}^{\perp}).\label{2}
\end{eqnarray*}
From these equations, we obtain
\begin{equation*}
\kappa(LQ_k^{\perp})=\frac{\sigma_{\max}(LQ_k^{\perp})}{\sigma_{\min}(LQ_k^{\perp})}
\geq \frac{\sigma_{\max}(LQ_{k+1}^{\perp})}{\sigma_{\min}(LQ_{k+1}^{\perp})}
=\kappa(LQ_{k+1}^{\perp}),
\end{equation*}
which implies \eqref{eq3}. 
We directly obtain \eqref{eq2} from \eqref{eq3} by noticing \eqref{LQ}.
\end{proof}

The theorem indicates that, when applied to solving \eqref{eq10},
the LSQR algorithm generally converges faster with $k$;
see \cite[p.~291]{bjorck96}.
Particularly, in exact arithmetic,
LSQR will find the exact solution $z_k$ of \eqref{eq10}
after at most $n-k$ iterations.
Therefore, the inner iteration
is faster convergence with $k$ becoming larger,
which means the LSQR method solving \eqref{eq10}
convergent faster with $k$ becoming larger.

Next, we present the hyb-LSMR, named as \cref{alg:plsmr}.

\begin{algorithm}[htbp]
\caption{(hyb-LSMR) 
\ Given $A\in \mathbb{R}^{m\times n}$,
$L\in\mathbb{R}^{p\times n}$ and $b\in\mathbb{R}^n$, compute the solution
$x_{L,k}$.}
\label{alg:plsmr}
\begin{algorithmic}[1]
\STATE Use Algorithm \ref{alg1}
to compute the projection $B_{proj}$.
\STATE Compute $x_k$ by \eqref{xk}.
\STATE Compute $z_k$ for \eqref{eq10} by LSQR and \cref{alg3}.
\STATE Compute the solution $x_{L,k}=x_k-z_k$.
\end{algorithmic}
\end{algorithm}

About the stopping tolerance of the inner least squares problems,
what we have to highlight is that
it is generally enough to set
$tol=10^{-6}$ in LSQR with Algorithm~\ref{alg2} at step 3 of Algorithms~\ref{alg:plsmr}.
Moreover,  $tol=10^{-6}$
is generally well conservative and larger $tol$ can be used, so
that LSQR with \cref{alg2} uses fewer iterations to achieve the convergence
and the hyb-LSMR algorithm
is more efficient; see \cite{yang} and \cite{jiayang2018} for more details.

\section{Numerical experiments}
In this section, we report numerical experiments
on several problems, including One-dimensional and
two-dimensional problems,
to demonstrate that the proposed new hybrid algorithms
work well and that the regularized solutions obtained by 
hyb-LSMR are at least as accurate
as those obtained by JBDQR,
and the proposed new method is considerably more efficient than JBDQR.

Table~\ref{tab1} lists all test problems,
which are from the
regularization toolbox \cite{nagy2004}. 
The one-dimensional problems are of the order $m=n=1,000$.
The two-dimensional problems are from \cite{nagy2004} with an order 
of $m=n=65, 536$. 
We denote the relative noise level
$$
\varepsilon = \frac{\|e\|}{\|b_{true}\|}.
$$

All the computations are carried out in Matlab R2019a 64-bit on
11th Gen Intel(R) Core(TM) i5-1135G7 2.40GHz processor
and 16.0 GB RAM with the machine precision
$\epsilon_{\rm mach}= 2.22\times10^{-16}$ under the Miscrosoft
Windows 10 64-bit system.

\begin{table}[ht]
\caption{The description of test problems.}\label{tab1}
\begin{tabular}{@{}lll@{}}
\toprule
 Problem        & Description                                & Size of $m, n$ \\
\midrule
     {shaw}     & one dimensional image restoration model    & $m=n=1,000$\\
     {baart}    & one dimensional gravity surveying problem  &  $m=n=1,000$\\
     {heat}     & Inverse heat equation                   &  $m=n=1,000$\\
     {gravity}  & one dimensional gravity surveying problem  &  $m=n=1,000$\\
     {deriv2}   & Computation of second derivative        &  $m=n=1,000$\\
     grain      & Two dimensional image deblurring      & $m=n=65, 536$\\
     text2    & spatially variant Gaussian blur      & $m=n=65, 536$\\
     {satellite} &spatially invariant atmospheric turbulence        & $m=n=65, 536$ \\
     {GaussianBlur440} & spatially invariant Gaussian blu      & $m=n=65, 536$\\
\bottomrule
\end{tabular}
\end{table}

Let $x^{reg}$ be the regularized solution obtained by
algorithms. We use the relative error
\begin{equation}\label{rel}
\frac{\|L(x^{reg}-x_{true})\|}{\|Lx_{true}\|}
\end{equation}
to plot the convergence curve of each algorithm with respect to $k$,
which is more instructive and suitable to use
the relative error (\ref{rel}) in the general-form regularization
context other than the standard relative error of $x_{reg}$;
see \cite[Theorems 4.5.1-2]{hansen98} for more details.
In the tables to be presented,
we will list the smallest relative errors and the total outer iterations
required to obtain the smallest relative errors in the braces.
We also will list the total CPU time which is counted in seconds
by the Matlab built-in commands
{\sf tic} and {\sf toc}
and the corresponding total outer iterations.
For the sake of length, 
we only display the noise level $\varepsilon =
\ 10^{-2}$ in Tables \ref{tab2} and \ref{tab3}.

For our new hybrid algorithms hyb-LSMR,
we use the Matlab built-in function {\sf lsqr}
with Algorithm~\ref{alg1} avoiding forming the dense matrices $L(I_n-Q_kQ_k^T)$
and $L(I_n-Q_{k+1}Q_{k+1}^T)$ explicitly
to compute (\ref{eq10})
with the default stopping tolerance $tol=10^{-6}$.
For the JBDQR algorithm \cite{jiayang2020},
we use the same function with the same stopping tolerance
to solve
the inner least squares problems.

The regularization matrix is chosen as
\begin{equation}\label{l3}
L=\left(
    \begin{array}{c}
      I_N\otimes L_1 \\
      L_1 \otimes I_N \\
    \end{array}
  \right),
\end{equation}
where  
\begin{equation}\label{l1}
L_1 = \left(
        \begin{array}{ccccc}
          1 & -1 &  &  &  \\
           & 1 & -1 &  &  \\
           &  & \ddots & \ddots &  \\
             &  &  & 1  & -1\\
        \end{array}
      \right)\in \mathbb{R}^{(n-1)\times n}
\end{equation}
is the scaled discrete approximation of the first
derivative operator in the two dimensional case incorporating no assumptions
on boundary conditions; see \cite[Chapter 8.1-2]{hansen10}.

\begin{table}[ht]
\caption{The relative errors and the optimal
regularization parameters in the braces for test problems in Table~\ref{tab1}.}\label{tab2}
\begin{minipage}[t]{1\textwidth}
\begin{tabular*}{\textwidth}{@{\extracolsep\fill}lcccccc}
\toprule%
& \multicolumn{2}{@{}c@{}}{$\varepsilon=10^{-2}$}  \\\cmidrule{2-3}%
&{hyb-LSMR}    &JBDQR     \\
\midrule
 {shaw}     &0.1630(8)     &  0.1743(4)     \\
 {baart}    &0.5492(3)      &0.5976(1)         \\
 {heat}     & 0.2697(16)    &0.2568(17)         \\
 {gravity}     & 0.3413(9)     &1.0341(1)  \\
{grain}     &0.7017(71)     &0.7872(48)        \\
{text2}       &0.9704(84)     &0.9791(86)         \\
{satellite} &0.9318(113)     &0.9327(98)      \\
{GaussianBlur440}   &0.9519(62)     &0.9515(116)       \\
\bottomrule
\end{tabular*}
\end{minipage}
\end{table}

From Table \ref{tab2},
we observe that for all test problems,
the best regularized solution by {hyb-LSMR} is
at least as accurate as, and often considerably more accurate than,
that by JBDQR;
see, e.g., the results on test problems shaw, baart, gravity, and grain.
In particular, for gravity, JBDQR fails, while our new hybrid algorithm performs well and obtains a highly accurate regularized solution.

\begin{table}[ht]
\caption{The CPU time of {hyb-LSMR} and JBDQR,
the ratio of the CPU time of JBDQR to that of hyb-LSMR  ({times}),
and the total outer iterations ({iteration})
for test problems in Table~\ref{tab1}.}\label{tab3}
\begin{minipage}[t]{1\textwidth}
\begin{tabular*}{\textwidth}{@{\extracolsep\fill}lcccc}
\toprule%
\end{tabular*}
\end{minipage}
\begin{minipage}[t]{1\textwidth}
\begin{tabular*}{\textwidth}{@{\extracolsep\fill}lcccc}
& \multicolumn{4}{@{}c@{}}{$\varepsilon=10^{-2}$ } \\\cmidrule{2-5}%
&{\ hyb-LSMR}    &JBDQR     &{ times}  &{ iteration}\\
\midrule
 {shaw}     &0.3419   &3.4139 &9.9834  	   &28   \\
 {baart}    &0.30367    &4.0828   &13.4445&28  \\
 {heat}     &0.3854  &3.7596   &9.7547 &28 \\
 {gravity}     &0.3373  &3.7329   &11.3716  &28\\
{grain}   	&345.0749	 &7629.2553 &22.1089 &200	  \\
{text2}    &1404.5045 &5719.7726		&4.0724 &200	\\
{satellite}  &958.6520 &4647.4674   &4.8479 &200\\
{GaussianBlur440}  &287.5606 &976.4092	&3.3954 &200	\\
\bottomrule
\end{tabular*}
\end{minipage}
\end{table}

Based on the analysis of the JBD process in Section \ref{sec2} and the comments in Section \ref{sec4}, at the same number of outer iterations, hyb-LSMR can be significantly cheaper than JBDQR. As shown in Table \ref{tab3}, for each test problem, the CPU time of {hyb-LSMR} is notably less than that of JBDQR for the same number of outer iterations. For one-dimensional problems, the CPU time for our new hybrid LSMR algorithm is under half a second, whereas JBDQR takes more than three seconds. For two-dimensional problems, hyb-LSMR completes in under one thousand seconds for all cases except text2, while JBDQR requires over one thousand seconds, with grain taking more than seven thousand seconds.

\begin{figure}{ht}
 \begin{minipage}{0.48\linewidth}
   \centerline{\includegraphics[width=6.0cm,height=4cm]{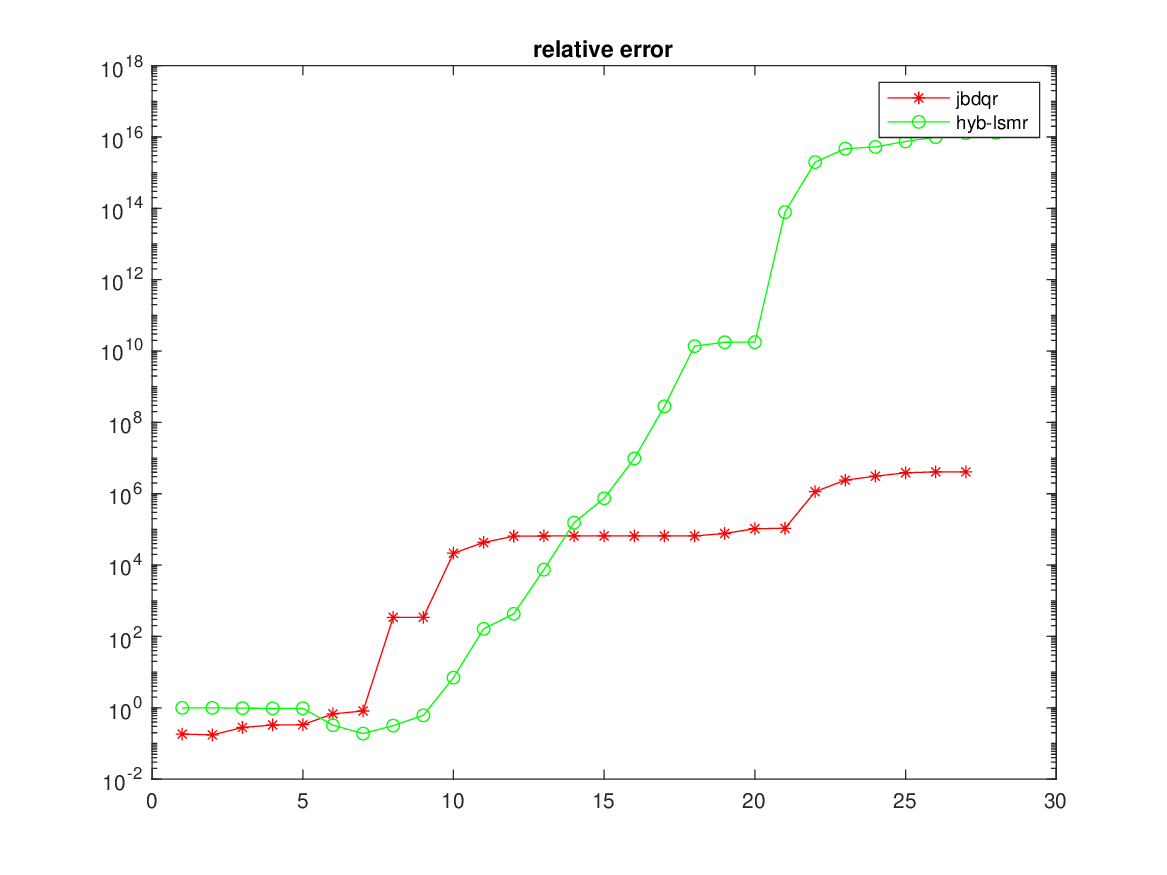}}
   \centerline{(a)}
 \end{minipage}
 \hfill
 \begin{minipage}{0.48\linewidth}
   \centerline{\includegraphics[width=6.0cm,height=4cm]{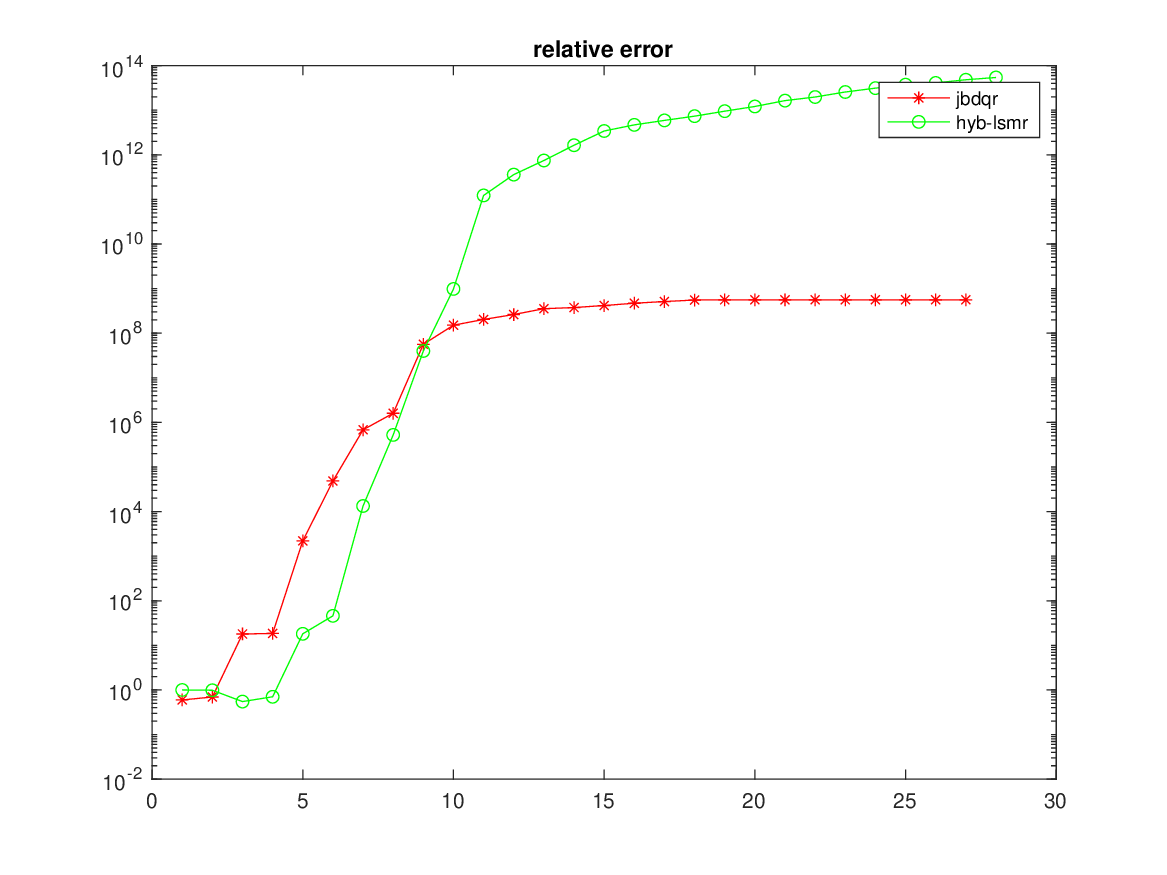}}
   \centerline{(b)}
 \end{minipage}

 \vfill
 \begin{minipage}{0.48\linewidth}
   \centerline{\includegraphics[width=6.0cm,height=4cm]{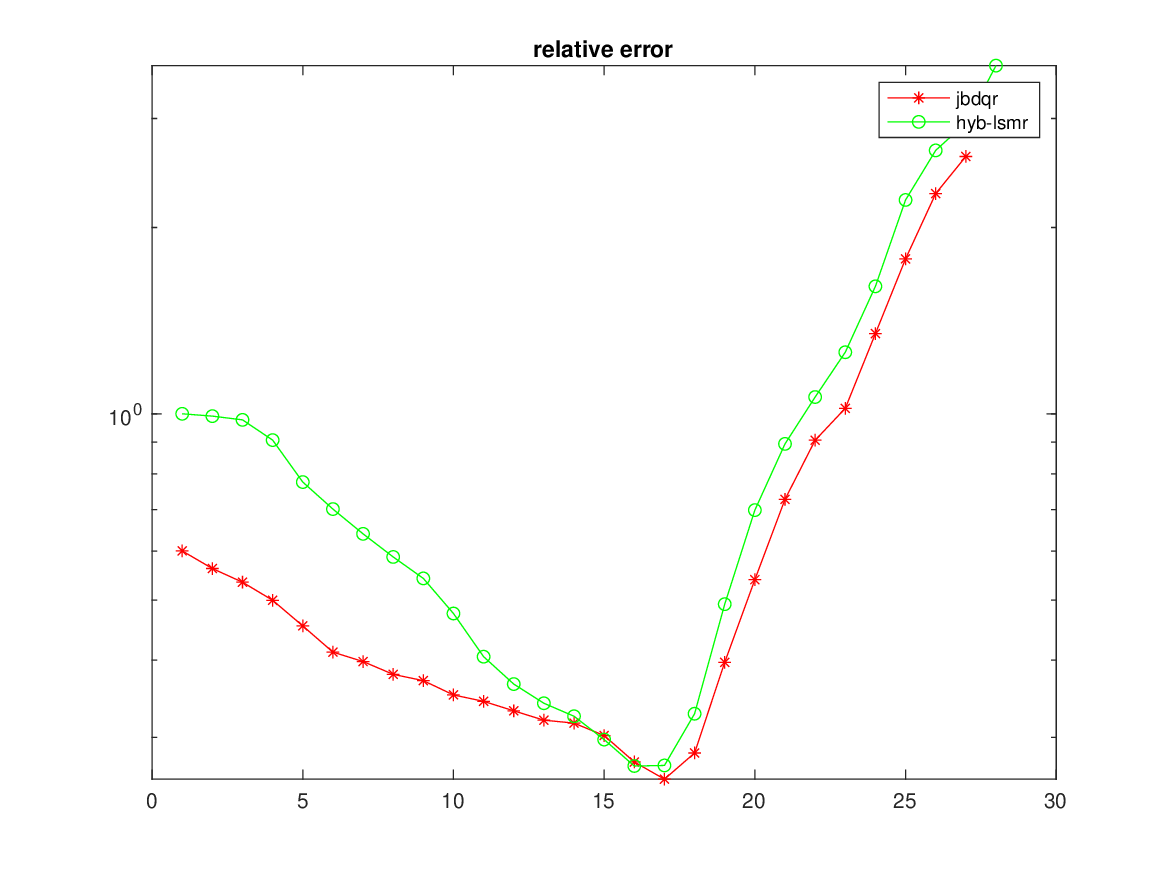}}
   \centerline{(c)}
 \end{minipage}
 \hfill
 \begin{minipage}{0.48\linewidth}
   \centerline{\includegraphics[width=6.0cm,height=4cm]{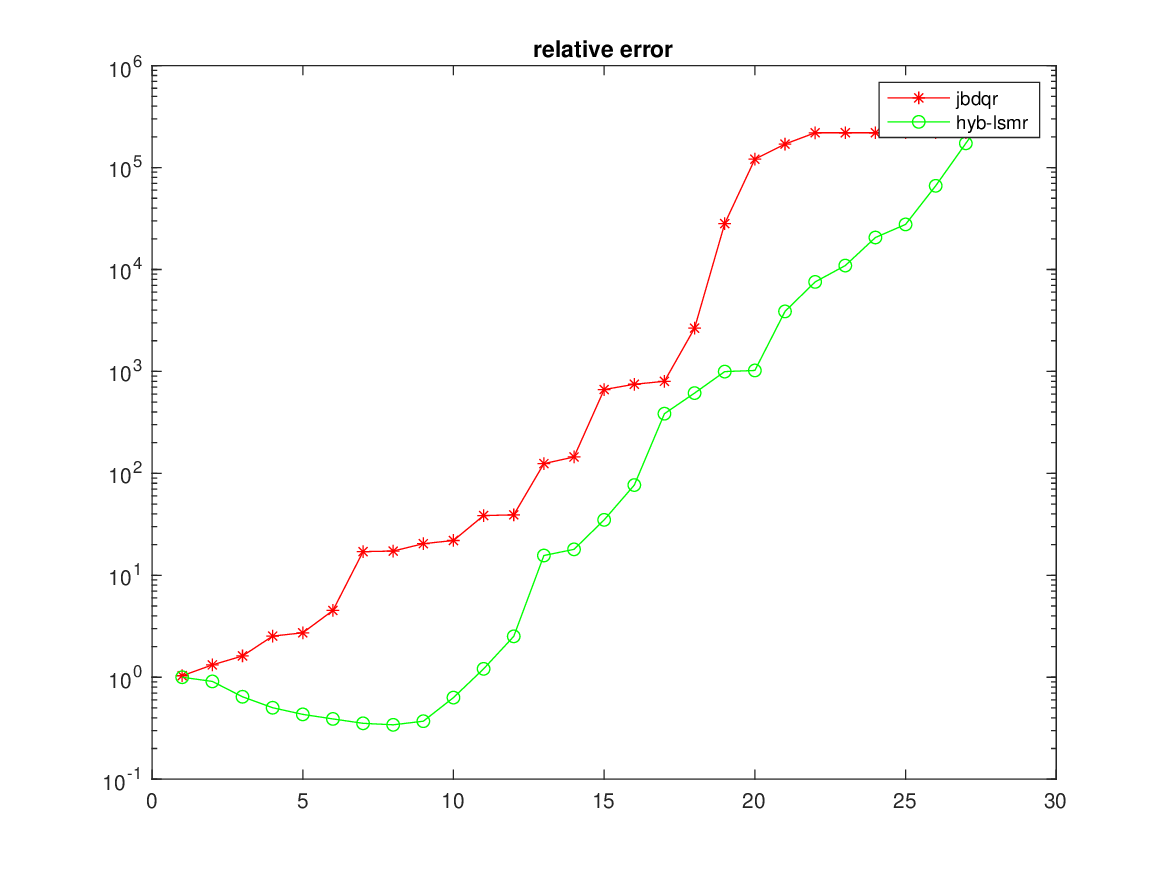}}
   \centerline{(d)}
 \end{minipage}
 \hfill
\begin{minipage}{0.48\linewidth}
  \centerline{\includegraphics[width=6.0cm,height=4cm]{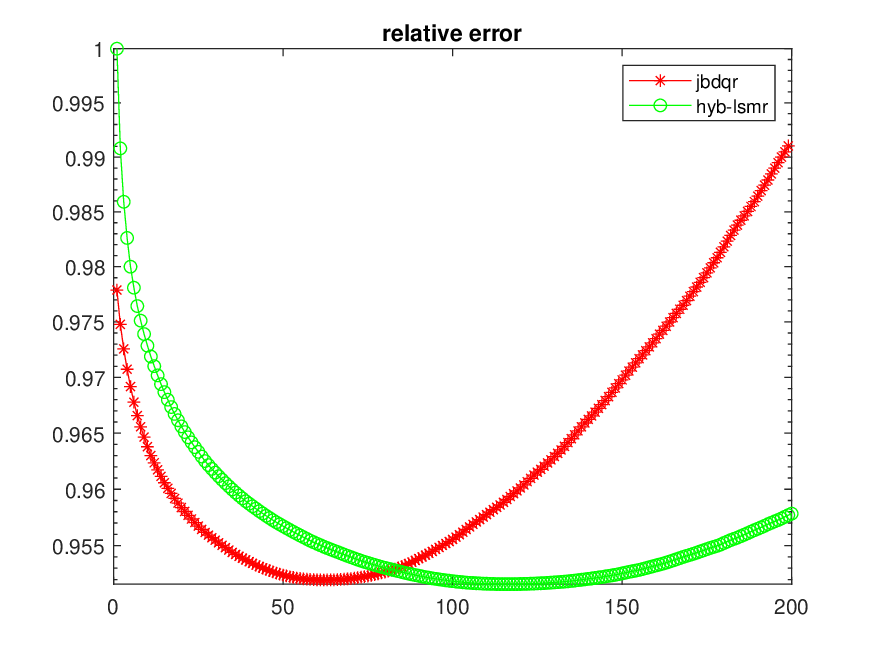}}
  \centerline{(e)}
\end{minipage}
\hfill
\begin{minipage}{0.48\linewidth}
  \centerline{\includegraphics[width=6.0cm,height=4cm]{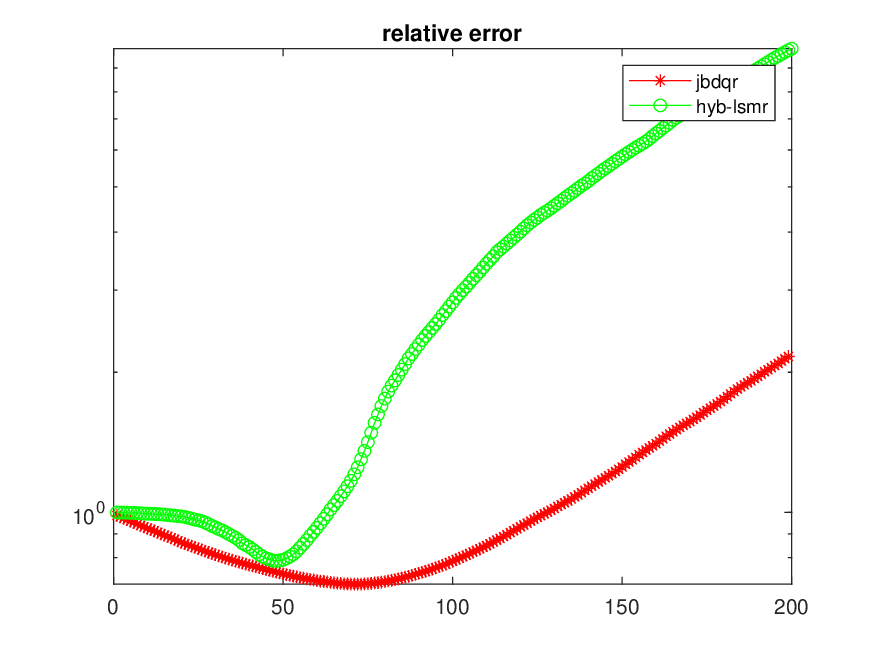}}
  \centerline{(f)}
\end{minipage}
\vfill
\begin{minipage}{0.48\linewidth}
  \centerline{\includegraphics[width=6.0cm,height=4cm]{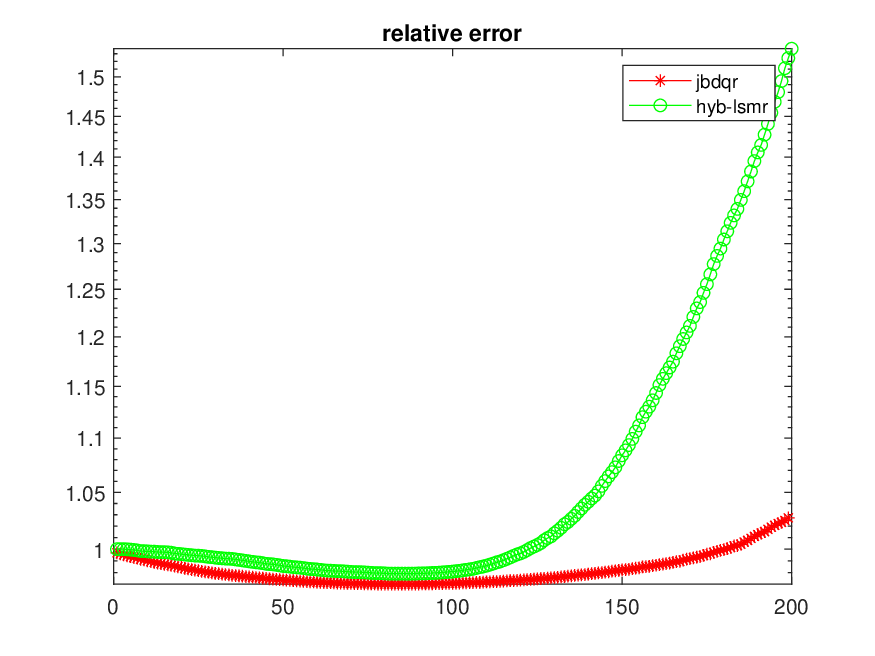}}
  \centerline{(g)}
\end{minipage}
\hfill
\begin{minipage}{0.48\linewidth}
  \centerline{\includegraphics[width=6.0cm,height=4cm]{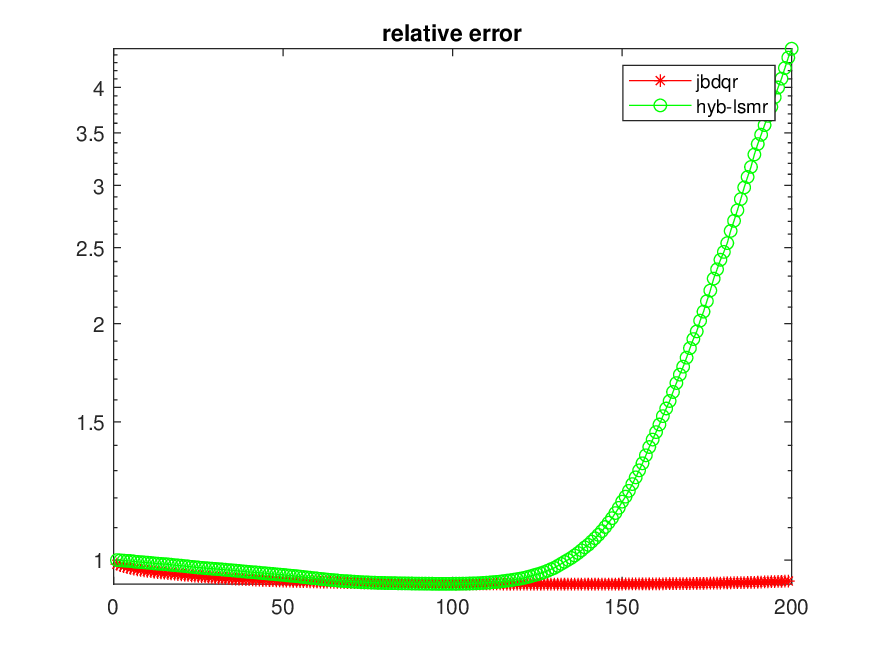}}
  \centerline{(h)}
\end{minipage}
\caption{The relative error of {hyb-LSMR} and JBDQR
 with $\varepsilon=10^{-2}$: 
 (a) { baart}; (b) { shaw}; (c) { heat}; (d) { gravity};
(e) {Gaussianblur440}; (f) {grain}; (g) {text2}; (h) {satellite}.}
\label{fig2}
\end{figure}

In Figure \ref{fig2} we display the convergence processes of {hyb-LSMR} and JBDQR for $\varepsilon=10^{-2}$.
We can see that the best regularized solutions by {hyb-LSMR} are at least as accurate as, sometime
more accurate than,
the counterparts by JBDQR except {grain},
but hyb-LSMR takes much less time than JBDQR does for grain,
which we can see from table \ref{tab2}.
Moreover, as the figure shows, for every test problem,
both algorithms under consideration exhibit semi-convergence \cite{hansen98, hansen10}:
the convergence curves of the three algorithms first decrease with $k$,
then increase.
This means the iterates converge to $x_{true}$
in an initial stage; afterwards the iterates start to diverge from $x_{true}$.
It is worth mentioning that it is proved \cite{jiayang2020} that the
JBDQR iterates take the form of ﬁltered GSVD
expansions, which shows that JBDQR have the semi-convergence property.
These results indicate that both algorithms exhibt the semi-convergence property.

 In summary, for all test problems, hyb-LSMR outperforms JBDQR in both accuracy and efficiency. Therefore, it serves as a strong alternative to JBDQR.

\begin{figure}
\begin{minipage}{0.48\linewidth}
  \centerline{\includegraphics[width=6.0cm,height=4cm]{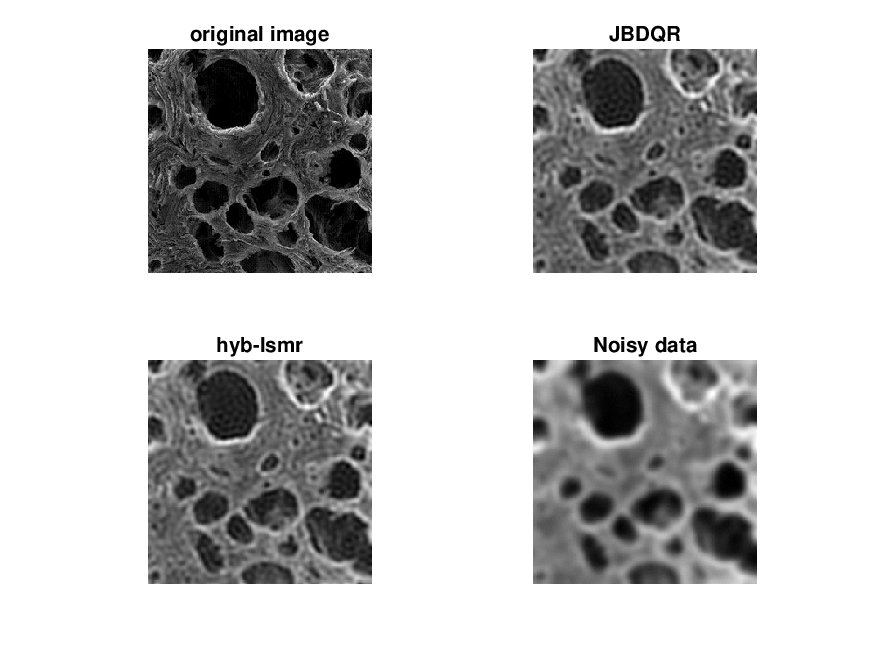}}
  \centerline{(a)}
\end{minipage}
\hfill
\begin{minipage}{0.48\linewidth}
  \centerline{\includegraphics[width=6.0cm,height=4cm]{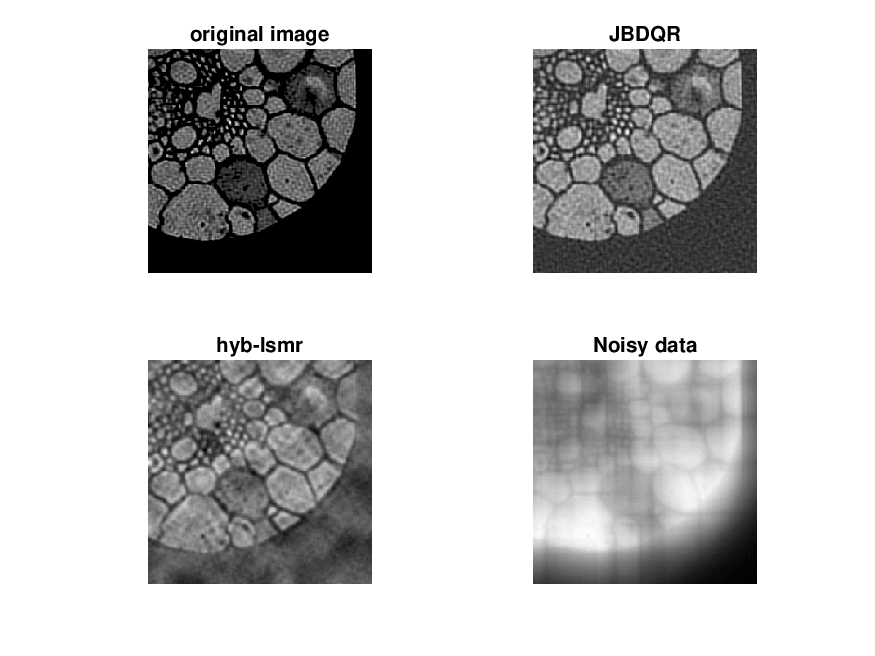}}
  \centerline{(b)}
\end{minipage}
\vfill
\begin{minipage}{0.48\linewidth}
  \centerline{\includegraphics[width=6.0cm,height=4cm]{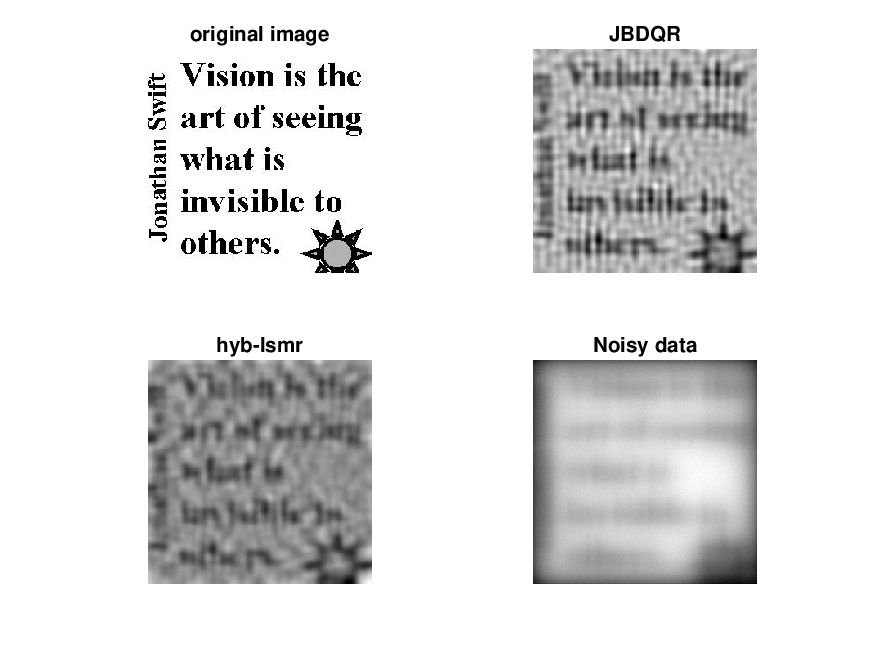}}
  \centerline{(c)}
\end{minipage}
\hfill
\begin{minipage}{0.48\linewidth}
  \centerline{\includegraphics[width=6.0cm,height=4cm]{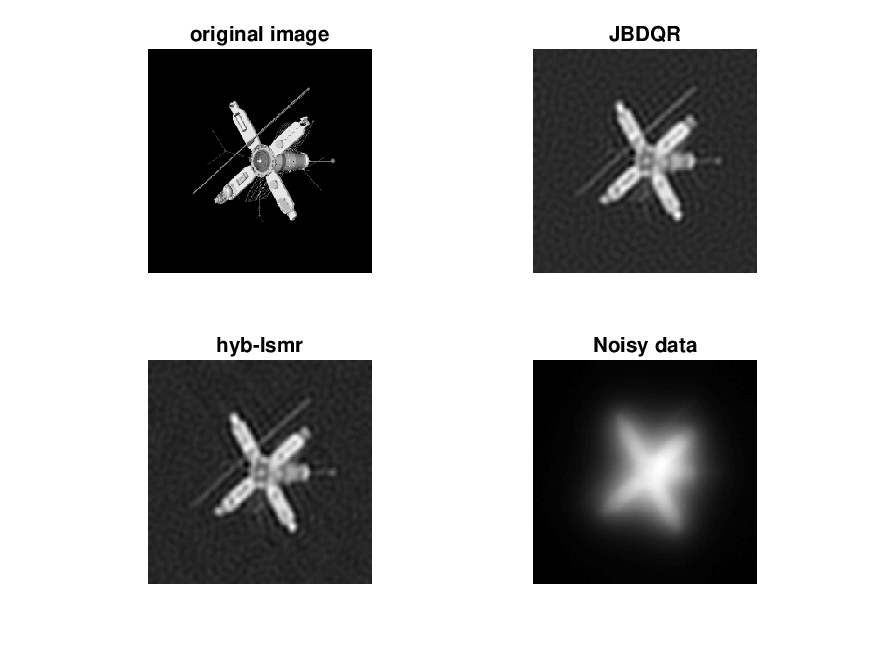}}
  \centerline{(d)}
\end{minipage}
\caption{The exact images and the reconstructed images for the four two dimensional test problems
with $\varepsilon=10^{-2}$ and $L$ defined in (\ref{l3}):
(a) {Gaussianblur440}; (b) {grain};(c) text2;  (d) {satellite}.}
\label{fig3}
\end{figure}

The exact images and the
reconstructed images for the four test problems
with $\varepsilon=10^{-2}$ and $L$ defined by (\ref{l3})
are displayed in Figure \ref{fig3}.
Clearly, the reconstructed
images by {hyb-LSMR} are at least as sharp as those
by JBDQR, especially for GaussianBlur440 and satellite .

\section*{Declarations}

%
\begin{itemize}
\item This study was funded by Zhejiang A and F University.
(No. 203402000401).
\item Not applicable.
\item Data availability
\item Materials availability
\item Code availability
\end{itemize}
%




\bibliographystyle{siamplain}
\bibliography{hyblsmr}

\begin{thebibliography}{10}

\bibitem{aster}
{\sc R.~C. Aster, B.~Borchers, and C.~H. Thurber}, {\em Parameter estimation
  and inverse problems}, Elsevier, New York, 2018.

\bibitem{berisha}
{\sc S.~Berisha}, {\em Restore tools: Iterative methods for image restoration
  available from http://www. mathcs. emory. edu/~ nagy}, RestoreTools Go to
  reference in article,  (2012).

\bibitem{bjorck96}
{\sc {\AA}.~Bj{\"o}rck}, {\em Numerical methods for least squares problems},
  SIAM, Philadelphia, PA, 1996.

\bibitem{bjorck2015numerical}
{\sc {\AA}.~Bj{\"o}rck}, {\em Numerical methods in matrix computations},
  vol.~59, Springer, Texts in Applied Mathematics, 2015.

\bibitem{chung}
{\sc J.~Chung and S.~Gazzola}, {\em Computational methods for large-scale
  inverse problems: A survey on hybrid projection methods}, arXiv preprint
  arXiv:2105.07221,  (2021).

\bibitem{elden82}
{\sc L.~Eld{\'e}n}, {\em A weighted pseudoinverse, generalized singular values,
  and constrained least squares problems}, BIT Numerical Mathematics, 22
  (1982), pp.~487--502.

\bibitem{engl96}
{\sc H.~W. Engl, M.~Hanke, and A.~Neubauer}, {\em Regularization of inverse
  problems}, vol.~375, Springer Science \& Business Media, 1996.

\bibitem{epstein2007}
{\sc C.~L. Epstein}, {\em Introduction to the mathematics of medical imaging},
  SIAM, Philadelphia, PA, 2007.

\bibitem{gazzola2014}
{\sc S.~Gazzola and J.~G. Nagy}, {\em Generalized arnoldi--tikhonov method for
  sparse reconstruction}, SIAM J. Sci. Comput., 36 (2014), pp.~B225--B247.

\bibitem{gazzola2013multi}
{\sc S.~Gazzola, P.~Novati, et~al.}, {\em Multi-parameter arnoldi-tikhonov
  methods}, Electron. Trans. Numer. Anal, 40 (2013), pp.~452--475.

\bibitem{golub65}
{\sc G.~Golub and W.~Kahan}, {\em Calculating the singular values and
  pseudo-inverse of a matrix}, J. Soc. Indust. Appl. Math. Ser. B Numer. Anal.,
  2 (1965), pp.~205--224.

\bibitem{golub13}
{\sc G.~H. Golub and C.~F. Van~Loan}, {\em Matrix computations}, Johns Hopkins
  University Press, Baltimore, 2013.

\bibitem{haber2014}
{\sc E.~Haber}, {\em Computational methods in geophysical electromagnetics},
  SIAM, Philadelphia, PA, 2014.

\bibitem{hansen98}
{\sc P.~C. Hansen}, {\em Rank-deficient and discrete ill-posed problems:
  numerical aspects of linear inversion}, SIAM, Philadelphia, PA, 1998.

\bibitem{hansen10}
{\sc P.~C. Hansen}, {\em Discrete inverse problems: insight and algorithms},
  SIAM, Philadelphia, PA, 2010.

\bibitem{hansen92}
{\sc P.~C. Hansen, T.~Sekii, and H.~Shibahashi}, {\em The modified truncated
  svd method for regularization in general form}, SIAM J. Sci. Comput., 13
  (1992), pp.~1142--1150.

\bibitem{hnety}
{\sc I.~Hn{\v{e}}tynkov{\'a}, M.~Ple{\v{s}}inger, and Z.~Strako{\v{s}}}, {\em
  The regularizing effect of the golub-kahan iterative bidiagonalization and
  revealing the noise level in the data}, BIT Numer. Math., 49 (2009),
  pp.~669--696.

\bibitem{jia20203}
{\sc Z.~Jia}, {\em Regularization properties of krylov iterative solvers cgme
  and lsmr for linear discrete ill-posed problems with an application to
  truncated randomized svds}, Numer. Algor., 85 (2020), pp.~1281--1310.

\bibitem{jiayang2018}
{\sc Z.~Jia and Y.~Yang}, {\em Modified truncated randomized singular value
  decomposition (mtrsvd) algorithms for large scale discrete ill-posed problems
  with general-form regularization}, Inverse Probl., 34 (2018), p.~055013.

\bibitem{jiayang2020}
{\sc Z.~Jia and Y.~Yang}, {\em A joint bidiagonalization based iterative
  algorithm for large scale general-form tikhonov regularization}, Appl. Numer.
  Math., 157 (2020), p.~159—177.

\bibitem{kirsch11}
{\sc A.~Kirsch et~al.}, {\em An introduction to the mathematical theory of
  inverse problems}, vol.~120, Springer, New York, 2011.

\bibitem{miller}
{\sc K.~Miller}, {\em Least squares methods for ill-posed problems with a
  prescribed bound}, SIAM J. Math. Anal., 1 (1970), pp.~52--74.

\bibitem{nagy2004}
{\sc J.~G. Nagy, K.~Palmer, and L.~Perrone}, {\em Iterative methods for image
  deblurring: a matlab object-oriented approach}, Numer. Algor., 36 (2004),
  pp.~73--93.

\bibitem{nat01}
{\sc F.~Natterer}, {\em The mathematics of computerized tomography}, SIAM,
  2001.

\bibitem{novati2014adaptive}
{\sc P.~Novati and M.~R. Russo}, {\em Adaptive arnoldi-tikhonov regularization
  for image restoration}, Numerical Algorithms, 65 (2014), pp.~745--757.

\bibitem{Novati2014gcv}
{\sc P.~Novati and M.~R. Russo}, {\em A gcv based arnoldi-tikhonov
  regularization method}, BIT Numerical mathematics, 54 (2014), pp.~501--521.

\bibitem{paige1975solution}
{\sc C.~C. Paige and M.~A. Saunders}, {\em Solution of sparse indefinite
  systems of linear equations}, SIAM journal on numerical analysis, 12 (1975),
  pp.~617--629.

\bibitem{lsqr1982}
{\sc C.~C. Paige and M.~A. Saunders}, {\em Lsqr: An algorithm for sparse linear
  equations and sparse least squares}, ACM Trans. Math. Software, 8 (1982),
  pp.~43--71.

\bibitem{tikhonov63}
{\sc A.~N. Tihonov}, {\em Solution of incorrectly formulated problems and the
  regularization method}, Soviet Math., 4 (1963), pp.~1035--1038.

\bibitem{bazan2014}
{\sc F.~S. Viloche~Bazan, M.~C. Cunha, and L.~S. Borges}, {\em Extension of
  gkb-fp algorithm to large-scale general-form tikhonov regularization}, Numer.
  Linear Algebra with Appl., 21 (2014), pp.~316--339.

\bibitem{vogel02}
{\sc C.~R. Vogel}, {\em Computational methods for inverse problems}, SIAM,
  2002.

\bibitem{yang}
{\sc Y.~Yang}, {\em Hybrid cgme and tcgme algorithms for large-scale
  general-form regularization}, arXiv preprint arXiv:2301.04078,  (2023).

\end{thebibliography}

\end{document}